\documentclass[a4paper]{amsart}
\usepackage{amscd,amsmath,amssymb,amsthm}
\usepackage[all]{xy}
\usepackage{tikz,tikz-cats}

\newcommand{\RR}{{\mathbb R}}
\newcommand{\ZZ}{{\mathbb Z}}
\newcommand{\III}{
I }
\newcommand{\UI}{
{\mathbb I}}
\newcommand{\SDTop}{
\mathit{SDTop} }
\newcommand{\DTop}{
\mathit{DTop} }
\newcommand{\Top}{
\mathit{Top} }
\newcommand{\STop}{
\mathit{STop} }
\newcommand{\LL}{
L }
\newcommand{\saturate}{
\mathit{sat} }

\newcommand{\id}{\mathit{id}}
\newcommand{\ev}{\mathit{ev}}
\newcommand{\cal}[1]{\mathcal{#1}}
\newtheorem{theorem}{Theorem}[section]

\newtheorem{prop}[theorem]{Proposition}
\newtheorem{defi}[theorem]{Definition}
\newtheorem{remark}[theorem]{Remark}
\newtheorem{example}[theorem]{Example}
\newtheorem{examples}[theorem]{Examples}

\begin{document}

\title{Saturating directed spaces}
 
\author{Andr\'e Hirschowitz} \curraddr{Universit\'e de Nice - Sophia Antipolis}
 
\author{Michel Hirschowitz} \curraddr{CEA - LIST}
 
\author{Tom Hirschowitz} \curraddr{CNRS, Universit\'e de Savoie}
\keywords{}
\dedicatory{}
\thanks{This work owes much to the second author's wedding on October 30, 2010.}
 
\maketitle
\section{Introduction}
 
Directed algebraic topology~\cite{Grandis} (see also, e.g.,
\cite{Pratt:1991:MCG:99583.99625,DBLP:conf/concur/GoubaultJ92,Grandis,Gaucher03,Fajstrup2006241,DBLP:journals/acs/Krishnan09,Worytkie:sheaves})
has recently emerged as a variant of algebraic topology. In the
approach proposed by Grandis, a \emph {directed} topological space (or
\emph{d-space} for short), is a topological space equipped with a set
of \emph{directed} paths satisfying three conditions. These conditions
are the three conditions necessary for constructing the so-called
fundamental category: constant paths are directed for having
identities, stability under concatenation is required for having
composition, and reparameterisation is required for having
associativity.  These are somehow minimal conditions, which leave
room for a lot of exotic examples (see Examples~\ref{ex:exotic}).
 
In the present work we propose an additional condition of saturation
for distinguished sets of paths and show how it allows to rule out
exotic examples without any serious collateral damage.
 
Our condition involves ``directed'' functions (to the unit interval
$\III$), namely those which are non-decreasing along each directed
path. And it asserts that a path along which any such (local) directed
function is non-decreasing should be directed itself.
 
Our saturation condition is local in a natural sense, and is satisfied
by the directed interval (and the directed circle). Furthermore we
show in which sense it is the strongest condition fulfilling these two
basic requirements.
 
Our saturation condition selects a full subcategory $\SDTop$ of the
category $\DTop$ of d-spaces, and we show that this new category has
all standard desirable properties, namely:
 
\begin{itemize}
\item
  $\SDTop$ is a full, reflective subcategory of $\DTop$, wich means that
  there is a nice saturation functor from $\DTop$ to $\SDTop$;
 
\item it is closed under arbitrary limits;
 
\item although it is not closed under colimits (as a subcategory), it has arbitrary
  colimits, each of which is obtained by saturation of the
  corresponding colimit in $\DTop$;
 
\item $\SDTop$ is a $\mathit{dIP1}$- category in the sense
  of~\cite{Grandis} which essentially means that it has nice cylinder
  and cocylinder constructions;
 
\item the forgetful functor from $\SDTop$ to $\Top$ has both a right and
  a left adjoint.
 
\end{itemize}

  Altogether these properties satisfy the {\em general principles}
  which, according to Grandis~\cite[Section 1.9]{Grandis}, should be
  satisfied by a {\em good topological setting} for directed algebraic
  topology.

  In Section~\ref{sec:cond} we describe our sheaf of ``directed''
  functions, and introduce our saturation condition.  In
  Section~\ref{sec:adj}, we exhibit adjunctions relating our new
  category $\SDTop$ to $\Top$ and $\DTop$.  In Section~\ref{sec:lims},
  we prove the completeness and cocompleteness properties of $\SDTop$.
  In Section~\ref{sec:cyl}, we prove that $\SDTop$ admits cylinder and
  cocylinder constructions with the desired properties.  Finally in
  Section~\ref{sec:other}, we discuss other saturation conditions, and
  show in which sense ours is maximal.

\section{Saturated d-spaces}\label{sec:cond}
 
We denote by $I$ the standard closed unit interval, and by $\DTop$ the category where
\begin{itemize}
\item objects  
 are all \emph{directed} spaces, i.e., pairs $(X, dX)$ of
  a topological space $X$ and a set $dX$ of continuous maps $I \to X$,
subject to the following three conditions
  \begin{itemize}
  \item constant paths are in $dX$,
  \item $dX$ is stable under concatenation,
\item $dX$ is stable under precomposition with continuous, non-decreasing maps
  $I \to I$;
  \end{itemize}
 
\item morphisms from $(X, dX)$ to $(Y, dY)$ are all continuous maps $f \colon
  X \to Y$ satisfying $f \circ dX \subseteq dY$.
\end{itemize}
The set $dX$ is called the set of \emph{directed} paths, or \emph{d-paths} in $(X, dX)$.  
 
In the sequel, for a $d$-space $X$, we will also write $X$ for the underlying topological space, and we will write  $dX$
for its set of directed paths.
 
We denote by $\UI$ the $d$-space obtained by equipping the standard closed unit interval
with the set of non-decreasing (continuous) paths.
 
\begin{examples}\label{ex:exotic}
As promised, here are a few exotic examples. For each of them, the underlying space is either the usual plane $X :=\RR^2$ or its quotient, the standard torus
$\overline X := \RR^2 / \ZZ^2$, which we consider equipped with the usual product order (or local order). Hence we just specify  
the distinguished subset of paths, either $dX$ or $d\overline X$.

\begin{enumerate}
\item 
$dX$ consists of all horizontal
  paths with rational ordinate (i.e., continuous maps $p \colon \III
  \to \RR^2$ with $p(t) = (q(t),a)$, for some rational $a$ and
  continuous $q \colon \III \to \RR$). 
\item 
 $d\overline X$ consists of all horizontal  
 paths with rational ordinate (i.e., continuous maps $p \colon \III
  \to \RR^2 / \ZZ^2$ with $p(t) = (q(t),\overline a)$, for some rational $a$ and
  continuous $q \colon \III \to \RR / \ZZ$). 
\item 
 $dX$ consists of all horizontal nondecreasing
  paths with rational ordinate (i.e., continuous maps $p \colon \III
  \to \RR^2$ with $p(t) = (q(t),a)$, for some rational $a$ and
  continuous nondecreasing $q \colon \III \to \RR$). 
\item 
$d\overline X$ consists of all horizontal locally nondecreasing
  paths with rational ordinate (i.e., continuous maps $p \colon \III
  \to \RR^2 / \ZZ^2$ with $p(t) = (q(t),\overline a)$, for some rational $a$ and
  continuous locally nondecreasing $q \colon \III \to \RR / \ZZ$). 
\item  
  $dX$
  consists of piecewise horizontal or vertical paths, i.e., (finite)
  concatenations of vertical and horizontal paths.
\item  
  $d \overline X$
  consists of piecewise horizontal or vertical paths.
\item  
  $dX$
  consists of continuous $p \colon \III \to
  \RR^2$ whose restriction to some dense open $U \subseteq \RR^2$ is
  locally piecewise horizontal or vertical.

\item 
 $dX$
  consists of piecewise rectilinear paths
  with rational slope. (More generally, for any subset $P$ of $\RR$
  containing at least two distinct elements, piecewise rectilinear
  paths in $\RR^2$ with slope in $P$ form a d-space.)
\item  
  $dX$
  consists of piecewise circular paths.
 
\item  
  $dX$
  consists of piecewise horizontal or vertical nondecreasing paths.

\item  
  $d \overline X$
  consists of piecewise horizontal or vertical locally nondecreasing paths.

\end{enumerate}
\end{examples}
 
We now describe our saturation process, which will rule out such
examples.
 
For any $d$-space $X$, we have the sheaf $\hat I_X$ on $X$ which assigns to
any open $U \subseteq X$ the set of
continuous functions $U \to I$. Note that each such $U$ inherits a structure of $d$-space. We refer to this structure by saying that $U$ is
an open subspace of $X$.

\begin{defi}
  For any d-space $X$, we denote by $\hat \UI_X$
 the subsheaf of $\hat I_X$ consisting, on any open subspace
  $U \subseteq X$, of all morphisms of $d$-spaces $c \colon U \to \UI$.
\end{defi}
 
We say that such a section  $c \colon U \to \UI$ of this sheaf is a \emph{directed} function (on $U$).
 
The statement that this is indeed a subsheaf needs a proof:

\begin{proof}
 Let us consider an open subspace $U \subseteq X$, a continuous map $c \colon U \to \III$,
  and an open covering $(U_j)_{j \in J}$ of $U$ such that any restriction
  $c_j$ of $c$ to a $U_j$ is a directed function. In order to prove that $c$ is directed,  we consider
  an arbitrary directed path $p \colon \III \to U$ in $dX$ and show that $c \circ p$ is
  non-decreasing.  Pulling back the covering along $p$ gives a covering
  $(V_j)_{j \in J}$ of $\III$, and the restrictions $p_j \colon V_j
  \to U_j$ of $p$ are locally non-decreasing. We conclude by recalling that a function which is locally non-decreasing on $\III$ is globally non-decreasing.\end{proof}
 
\begin{examples}\ 
\begin{itemize}
\item
 On the directed interval $\UI$, the sheaf of directed functions is the sheaf of locally non-decreasing functions.  
 
\item The directed circle is a locally ordered space, and its sheaf of directed functions is the sheaf of locally non-decreasing functions.  

\item On Examples~\ref{ex:exotic}, for  items 1 and 2,
the sheaf of
  directed functions is the sheaf of all
  continuous functions which are locally horizontally constant.

\item  On Examples~\ref{ex:exotic}, for  items 3 and 4,
the sheaf of
  directed functions is the sheaf of
functions which are locally nondecreasing in the first variable.

\item  On Examples~\ref{ex:exotic}, for  items 5 to 9,
the sheaf of
  directed functions is the sheaf of locally constant
functions.

\item  On Examples~\ref{ex:exotic}, for  items 10 and 11,
the sheaf of
  directed functions is the sheaf of locally nondecreasing
functions.
\end{itemize}

\end{examples}
 
Morphisms of $d$-spaces respect directed functions in the following sense:

\begin{prop}\label{prop:pbkdir}
 
  Let $f \colon X \to Y$ be a morphism of $d$-spaces. If $X' \subseteq X$
  and $Y' \subseteq Y$ are open subspaces with $f(X') \subseteq Y'$,
  then for any directed function $c \colon Y' \to \III$ on $Y'$, $c \circ f$ is a
  directed function on $X'$.
 
\end{prop}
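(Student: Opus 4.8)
The plan is to exploit the fact, built into the definition of the sheaf $\hat\UI_X$, that a directed function on an open subspace is nothing but a morphism of $d$-spaces into $\UI$. Under this reading the proposition becomes a purely formal consequence of the stability of morphisms under composition, once we know that $f$ restricts to a morphism between the relevant open subspaces. So the only real content is to verify that $f$ induces a morphism $f' \colon X' \to Y'$ of $d$-spaces; after that, $c \circ f'$ is a composite $X' \xrightarrow{f'} Y' \xrightarrow{c} \UI$ of morphisms, hence itself a directed function on $X'$.

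First I would make the inherited $d$-structure on an open subspace $U \subseteq X$ explicit: a continuous path $p \colon \III \to U$ belongs to $dU$ precisely when its composite with the inclusion $U \hookrightarrow X$ belongs to $dX$. Since $U$ is open, a path into $U$ is the same datum as a path into $X$ whose image lies in $U$, so this description is unambiguous, and it is exactly the structure that makes the inclusion a morphism of $d$-spaces.

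Next, to see that $f' \colon X' \to Y'$ (well defined because $f(X') \subseteq Y'$) is a morphism, I would take an arbitrary $p \in dX'$. By the description above, $\iota_{X'} \circ p \colon \III \to X$ lies in $dX$; since $f$ is a morphism of $d$-spaces, $f \circ \iota_{X'} \circ p$ lies in $dY$. But this path factors as $\iota_{Y'} \circ (f' \circ p)$ because $f(X') \subseteq Y'$, and applying the same description of the inherited structure, now on $Y'$, forces $f' \circ p \in dY'$. Hence $f'$ is indeed a morphism. Composing, $c \circ f'$ is a morphism $X' \to \UI$, i.e. a directed function on $X'$: concretely, for any $p \in dX'$ one has $(c \circ f') \circ p = c \circ (f' \circ p)$ with $f' \circ p \in dY'$, so directedness of $c$ makes this non-decreasing.

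I expect no genuine obstacle here; the proof is essentially formal. The one point demanding care is the unwinding of the open-subspace structures — keeping track of which inclusion is being composed with, and checking that directed paths in $X'$ and $Y'$ are exactly the restrictions of directed paths in $X$ and $Y$. Everything past that is just the fact that $d$-space morphisms compose.
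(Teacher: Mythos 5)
Your proposal is correct and follows exactly the paper's own argument: the paper's proof likewise reduces the statement to the observation that $f$ induces a morphism of $d$-spaces $X' \to Y'$, whose composite with the morphism $c \colon Y' \to \UI$ is again a morphism, hence a directed function. The only difference is that you spell out the verification (via the inherited $d$-structure on open subspaces) that the restriction $f' \colon X' \to Y'$ is indeed a morphism, a step the paper asserts without proof.
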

 
\begin{proof}
The point is that $f$ induces a morphism from $X'$ to $Y'$. Since $p$ is a morphism from $Y'$ to $\UI$, the composite $ p \circ f$ is a morphism from $X'$ to $\UI$.\end{proof}
\begin{remark}The previous statement has a sheaf-theoretic formulation
  as follows: the continuous $f \colon  X \to Y$ yields a companion sheaf
  morphism $f^* \colon  \hat I _Y \to f_* \hat I _X$ and if $f$ is a
  morphism, then $f^*$ sends the sheaf of directed functions on $Y$
  into the (direct image of the) sheaf of directed functions on $X$.
 
\end{remark}
 
Next we introduce our notion of weakly directed paths:
\begin{defi}
  We say that a path $c \colon  I \to X$ in a $d$-space $X$ is \emph{weakly
    directed} if, given any directed function $f\colon  U \to \III$ on an
  open subspace $U \subseteq X$, $f \circ c \colon  c^{-1}(U) \to \III$ is
  again directed, that is to say locally non-decreasing.
 
We denote by $\hat d X$ the set of weakly directed paths in $X$.
\end{defi}
\begin{remark}
 
Note that the inverse image $c^{-1}(U)$ need not be connected, so that the pull-back $f \circ c$ may be locally non-decreasing without being globally non-decreasing.
 
\end{remark}
 
\begin{example}
  Of course directed paths are also weakly directed. Here we sketch an
  example of a weakly directed path which is not directed. Consider
  the plane $\RR ^2$, equipped with the set of piecewise horizontal or
  vertical paths. Its directed functions are locally constant
  functions. As a consequence, all its paths are weakly directed.
 
\end{example}
 
We are now ready for the introduction of our saturation condition.
\begin{defi}
  We say that a $d$-space $X$ is \emph{saturated} if each weakly
  directed path in $X$ is directed, in other words if $\hat d X = dX$.
\end{defi}
 
\begin{examples}\label{ex:sat}
\ 
\begin{itemize}
\item
 On the directed interval $\UI$, the sheaf of directed functions is the sheaf of locally non-decreasing functions and $\UI$ is saturated.
 
\item Since its sheaf of directed functions is the sheaf of locally non-decreasing functions, the directed circle is saturated.  For this example, the consideration of the sheaf instead of only global directed functions is obviously crucial.

\item
 The Examples~\ref{ex:exotic} are nonsaturated. We will see below what is their \emph{saturation}.
\end{itemize}

\end{examples}

\section{Adjunctions}\label{sec:adj}
 
We now have the full subcategory $\SDTop$ consisting of saturated $d$-spaces, which is equipped with the forgetful functor $U \colon  \SDTop \to Top$. This functor
has a right adjoint which sends a space $X$ to the $d$-space obtained by equipping $X$ with the full set of paths in $X$. We will see below that $U$ also has a left adjoint, obtained as a composite
of the left adjoint to $U \colon  \DTop \to \Top$ and the left adjoint $\LL$ to $\SDTop \to \DTop$ which we build now.

\begin{defi}  Given a $d$-space $X :=(X, dX)$, we build its saturation $\hat X :=(X, \hat dX)$ (recall that $\hat d X$ is the set of weakly directed paths in $X$).
 
\end{defi}
 
\begin{examples}\ 
\begin{itemize}

\item On Examples~\ref{ex:exotic}, for  items 1 and 2,
weakly
  directed paths are horizontal paths.

\item  On Examples~\ref{ex:exotic}, for  items 3 and 4,
weakly
  directed paths are (locally) nondecreasing horizontal paths.

\item  On Examples~\ref{ex:exotic}, for  items 5 to 9,
all  paths are weakly directed.

\item  On Examples~\ref{ex:exotic}, for  items 10 and 11,
weakly
  directed paths are all (locally) nondecreasing  paths.
\end{itemize}

\end{examples}

In the previous definition, the verification that $\hat X$ is indeed a
saturated $d$-space is straightforward. It is also easy to check that
this construction is functorial, that is to say that if the continuous
map $c\colon X \to Y$ is a morphism of $d$-spaces, then it is a
morphism from $\hat X$ to $\hat Y$ as well. This yields our left
adjoint $\LL\colon \DTop \to \SDTop$ for the inclusion $J\colon \SDTop
\to \DTop$. Indeed $\LL \circ J$ is the identity, and we take the
identity as counit of our adjuntion. While for the unit $\eta$
evaluated at $X$, we take the identity map: $id_X\colon X \to \hat
X$. The equations for these data to yield an adjunction (see
\cite[Chapter IV, Thm 2 (v)]{MacLane:cwm}) are easily verified. Since
$J$ is an inclusion, this adjunction is a so-called \emph{reflection}
(a \emph{full} one since $\SDTop$ is by definition full in $\Top$).
 
\begin{remark}
  We could build a more symmetric picture as follows. There is a
  category $\STop$ consisting of spaces equipped with a ``structural''
  subsheaf of their sheaf $\hat{I}_X$ of functions to $I$. Morphisms
  are those continuous maps $f \colon X \to Y$ which, by composition,
  send the structural subsheaf on $Y$ into the structural subsheaf of
  $X$.  (In sheaf-theoretical terms, maps $f$ such that $D_Y
  \hookrightarrow \hat{I}_Y \to f_* (\hat{I}_X)$ factors through $f_*
  (D_X) \hookrightarrow f_* (\hat{I}_X)$, where $D_X$ and $D_Y$ are
  the structural sheaves of $X$ and $Y$, respectively.)  Our
  construction of the sheaf of directed functions can be upgraded into
  a functor $D\colon \DTop \to \STop$. Dually, our construction of the
  saturation can be upgraded into a functor $S \colon \STop \to \DTop$
  which is right adjoint to $D$.  This adjunction factors through
  $\SDTop$, which is thus not only a reflective subcategory of $\DTop$
  but also a coreflective subcategory of $\STop$.  We choose not to
  develop this material here and select only the next two statements.
\end{remark}
 
\begin{prop}
 
  Let $X$ be a topological space, and $\cal D$ a subsheaf of $\hat I
  _X$. Then the set $d_{\cal D} X$ of paths in $X$ along which local
  sections of $\cal D$ are locally non-decreasing turns $X$ into a
  saturated $d$-space.
\end{prop}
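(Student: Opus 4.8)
The plan is to verify separately that $(X, d_{\cal D} X)$ is a $d$-space and that it is saturated. The first part rests on one elementary observation: if a map $g$ defined on an open subset of $I$ is locally non-decreasing and $\phi \colon I \to I$ is continuous and non-decreasing, then $g \circ \phi$ is again locally non-decreasing, since one can pull back, through the continuous order-preserving $\phi$, a neighbourhood on which $g$ is non-decreasing. Granting this, constant paths are directed (each $c \circ p$ is then constant), and stability under precomposition with a continuous non-decreasing $\phi$ is immediate from the observation applied to each local section $c \in {\cal D}(U)$, using $(p \circ \phi)^{-1}(U) = \phi^{-1}(p^{-1}(U))$. Stability under concatenation is the only axiom needing a word of care: away from the junction $t = 1/2$ the required local monotonicity of $c \circ (p * q)$ comes from that of $c \circ p$ and $c \circ q$ (suitably reparametrised), and at $t = 1/2$ --- in the only relevant case, where the common endpoint lies in $U$ --- one glues the left- and right-hand non-decreasing behaviour through the shared value $c(p(1)) = c(q(0))$.

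The heart of the argument is the claim that every local section $c \in {\cal D}(U)$ is a \emph{directed function} on the open subspace $U$, i.e.\ a morphism $U \to \UI$ for the structure $d_{\cal D} X$. By the subspace convention, a directed path in $U$ is precisely a path $q \in d_{\cal D} X$ whose image lies in $U$. For such a $q$ the defining condition of $d_{\cal D} X$, specialised to the section $c$ and to the open set $U$, asserts that $c \circ q$ is locally non-decreasing on $q^{-1}(U) = I$; invoking the fact --- already used to prove that $\hat\UI_X$ is a subsheaf --- that a locally non-decreasing function on $I$ is globally non-decreasing, I obtain that $c \circ q$ is non-decreasing. Thus $c$ sends directed paths of $U$ to non-decreasing paths of $I$, so it is a directed function.

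Saturation then follows almost formally. Let $p$ be weakly directed. To prove $p \in d_{\cal D} X$ I must check, for each local section $c \in {\cal D}(U)$, that $c \circ p$ is locally non-decreasing on $p^{-1}(U)$; but by the previous step each such $c$ is a directed function, so this is exactly what weak directedness of $p$ provides. Hence $\hat d X \subseteq d_{\cal D} X$, and since directed paths are always weakly directed the two sets coincide, proving $(X, d_{\cal D} X)$ saturated.

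The step I expect to be the real crux is the claim of the second paragraph, and the one point I would handle with care there is the apparent circularity of feeding the section $c$ back into the condition defining $d_{\cal D} X$. There is in fact no circularity: $d_{\cal D} X$ is defined outright rather than recursively, so its membership condition for a path $q$ already quantifies over \emph{all} sections of $\cal D$, and may simply be instantiated at the particular section $c$.
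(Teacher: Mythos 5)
Your proof is correct and follows essentially the same route as the paper's: check that $(X, d_{\cal D}X)$ is a $d$-space, show that every section of $\cal D$ is a directed function for this structure (your second paragraph), and conclude that weakly directed paths already lie in $d_{\cal D}X$. The paper compresses the first and second steps into ``easily checked''; you have simply supplied those routine verifications, including the correct handling of the concatenation axiom and of the non-circularity point.
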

 
\begin{proof}
  Indeed, it is easily checked that $(X, d_{\cal D} X)$ is a
  $d$-space, and that $\cal D$ is contained in the sheaf of
  directed functions on this $d$-space. Hence weakly directed paths in
  $(X, d_{\cal D} X)$ are automatically in $d_{\cal D} X$.\end{proof}
\begin{remark}
The previous statement allows to check easily that a $d$-space is saturated after having guessed (instead of proved) what are its directed functions.
\end{remark}
 
\begin{prop}\label{prop:dmap}
 
Let $f \colon X \to Y$ be a continuous map between saturated  $d$-spaces. If $f$ transforms, by composition, (local) directed functions on $Y$ into (local) directed functions on $X$, then $f$ is a morphism of $d$-spaces.
\end{prop}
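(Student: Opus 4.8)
The plan is to reduce the claim to the saturation of $Y$ and then unwind the definition of weak directedness along a single path. To show that $f$ is a morphism of $d$-spaces we must show $f \circ p \in dY$ for every directed path $p \in dX$. Since $Y$ is saturated, $dY = \hat d Y$, so it suffices to prove that $f \circ p$ is \emph{weakly directed} in $Y$; that is, for every directed function $g \colon V \to \UI$ on an open subspace $V \subseteq Y$, the composite $g \circ (f \circ p)$ is locally non-decreasing on $(f \circ p)^{-1}(V) = p^{-1}(f^{-1}(V))$.

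Fix such a $g$ and write $U := f^{-1}(V)$, an open subspace of $X$. By hypothesis $f$ carries directed functions to directed functions, so $h := g \circ f \colon U \to \UI$ is a directed function on $U$. The remaining task is therefore purely local: to show that $h \circ p$ is locally non-decreasing on the open set $p^{-1}(U) \subseteq I$.

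The main work is this local step, because the statement ``$h$ is a directed function on $U$'' is phrased in terms of honest directed paths $I \to U$, whereas $p$ only passes through $U$ on the (possibly disconnected) open set $p^{-1}(U)$. I would bridge this gap by reparametrisation. Given $t_0 \in p^{-1}(U)$, choose a closed interval $[a,b]$ that is a neighbourhood of $t_0$ in $I$ and satisfies $p([a,b]) \subseteq U$ (possible since $p^{-1}(U)$ is open and $p$ is continuous). Let $r \colon I \to [a,b] \subseteq I$ be the increasing affine surjection; it is continuous and non-decreasing, so $p \circ r \in dX$ by stability of $dX$ under reparametrisation, and it lands in $U$, hence is a directed path of the open subspace $U$. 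Since $h$ is directed on $U$, the composite $h \circ p \circ r$ is non-decreasing on $I$; as $r$ is an increasing homeomorphism onto $[a,b]$, this shows $h \circ p$ is non-decreasing on a neighbourhood of $t_0$. As $t_0$ was arbitrary, $h \circ p = g \circ (f \circ p)$ is locally non-decreasing on $p^{-1}(U)$, which is exactly what weak directedness of $f \circ p$ demands. Applying this to every $g$ yields $f \circ p \in \hat d Y = dY$, and since $p$ was arbitrary, $f$ is a morphism of $d$-spaces.

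I expect the only delicate point to be the reparametrisation argument above: the bookkeeping at the endpoints $t_0 \in \{0,1\}$ (where the neighbourhood becomes one-sided) and the justification that $p \circ r$ counts as a directed path of $U$ for the inherited open-subspace structure (directed paths of $U$ being precisely the elements of $dX$ landing in $U$). It is worth noting that the argument uses only that $Y$ is saturated; saturation of $X$ is not needed. This proposition is the partial converse to Proposition~\ref{prop:pbkdir}: that result shows morphisms pull directed functions back, and here saturation is exactly what lets us recover the morphism property from that pullback condition.
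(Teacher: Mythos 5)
Your proof is correct and follows essentially the same route as the paper's: use saturation of $Y$ to reduce the claim to weak directedness of $f \circ p$, then pull back each local directed function on $Y$ along $f$ via the hypothesis and check the resulting local monotonicity condition. The only difference is one of detail: your explicit reparametrisation step (restricting to a closed subinterval of $p^{-1}(U)$ and precomposing with an increasing affine surjection) is exactly the argument behind the paper's earlier, implicitly invoked observation that directed paths are weakly directed, and your closing remark that saturation of $X$ is never used matches the remark the paper places right after the proposition.
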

 
\begin{proof}
Indeed, consider a directed path $c\colon  \III \to X$. we prove that $f \circ c$ is weakly directed (hence directed). for this we take a  
(local) directed function $p \colon  Y' \to \III$. We know that $p \circ f$ is a (local) directed function on $X$, hence $p \circ f \circ c$ is directed.  
\end{proof}
 
\begin{remark}
 
In the previous statement, the assumption that $X$ is saturated is useless, but we prefer to see this as a characterisation of morphisms in  $\SDTop$.
\end{remark}

\section{Completeness and cocompleteness}\label{sec:lims}
In this section, we prove that $\SDTop$ is complete and cocomplete. We
furthermore show that limits may be computed as in $\DTop$.
 
First, we have easily:
\begin{prop}
  $\SDTop$ is cocomplete.
\end{prop}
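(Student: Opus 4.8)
The plan is to derive the cocompleteness of $\SDTop$ from that of $\DTop$ by means of the reflection $\LL \dashv J$ constructed above, following the standard principle that a full reflective subcategory of a cocomplete category is again cocomplete. I first recall (or verify directly) that $\DTop$ is cocomplete: the forgetful functor $\DTop \to \Top$ is topological, since any source cone admits an initial lift (take the paths $p$ with $f_i \circ p$ directed for all $i$) and, dually, any sink admits a final lift (take the d-structure generated by the images of the given d-paths), so $\DTop$ inherits all colimits from $\Top$, computed by endowing the colimit of underlying spaces with the final set of d-paths.

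Now fix a small diagram $F\colon \mathcal I \to \SDTop$. I push it into $\DTop$ along the inclusion $J$, form the colimit $C := \operatorname{colim}_{\DTop}(J \circ F)$ together with its colimiting cocone, and then saturate, setting $\hat C := \LL C$. The claim is that $\hat C$, equipped with the cocone obtained by applying $\LL$ to the structural maps and using $\LL \circ J = \id$, is a colimit of $F$ in $\SDTop$. This also records the description announced in the introduction: colimits in $\SDTop$ are the saturations of the colimits computed in $\DTop$.

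To verify the universal property I compute, for an arbitrary saturated $d$-space $Z$, the natural chain of bijections
\[
\SDTop(\LL C, Z) \;\cong\; \DTop(C, J Z) \;\cong\; \lim_{i} \DTop(J F(i), J Z) \;\cong\; \lim_i \SDTop(F(i), Z),
\]
using in turn the adjunction $\LL \dashv J$, the universal property of the colimit $C$ in $\DTop$, and the full faithfulness of $J$. This isomorphism, natural in $Z$, exhibits $\LL C$ as representing cocones on $F$, that is, as its colimit; since $\LL \circ J = \id$, the bijections are compatible with the evident cocone, so $\LL C$ is genuinely the colimit in $\SDTop$.

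The only non-formal ingredient is the cocompleteness of $\DTop$, which I expect to be the main point requiring care; the reflective-subcategory argument itself is purely diagrammatic. The conceptual subtlety worth flagging is that $\SDTop$ is \emph{not} closed under colimits inside $\DTop$ — the colimit $C$ of saturated spaces is typically not saturated — so the passage through $\LL$ is essential, and it is precisely the reflection that restores the correct universal property after saturating.
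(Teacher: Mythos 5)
Your proposal is correct and follows essentially the same route as the paper: compute the colimit in $\DTop$ (which is cocomplete), apply the reflection $\LL$, and use $\LL \circ J = \id$ to conclude that the saturated colimit has the right universal property in $\SDTop$. The only differences are presentational — you verify the universal property via hom-set bijections where the paper invokes preservation of colimits by the left adjoint together with idempotency, and you sketch cocompleteness of $\DTop$ directly (via the topological forgetful functor) where the paper simply cites Grandis.
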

\begin{proof}
  $\DTop$ is cocomplete~\cite{Grandis}, so given any diagram $D \colon
  J \to \SDTop$, we may compute its colimit $d$ in $\DTop$. The left
  adjoint $\LL$ then preserves colimits, of course, but it also
  restores the original diagram by idempotency, so that $\LL (d)$ is a
  colimit of $D$ in $\SDTop$.\end{proof}
\begin{example}
 Here we sketch an example showing that a colimit of saturated $d$-spaces need not be saturated.
This involves four different directed planes. The first one $P_0$ has only constant directed paths. The next two ones $P'$ and $P''$ have only horizontal (resp. vertical) directed paths.
The fourth one is the coproduct $P_1$ of $P'$ and $P''$ along $P_0$. Its directed paths are piecewise horizontal or vertical. As we have seen above,   
 all its paths are weakly directed, hence it is not saturated.
\end{example}
 
\begin{example}\label{ex:satt}
 The product $\UI \times \UI$ in $\DTop$, which has as underlying
  space the product $\III \times \III$ and as directed paths all
  continuous maps $p \colon \III \to \III \times \III$ with
  non-decreasing projections, is saturated. Furthermore, for any open
  $U \subseteq \III \times \III$, a map $c \colon U \to \III$ is
  directed iff it is \emph{locally non-decreasing}, i.e., for any $x
  \in U$, there is a neighbourhood $V$ of $x$ on which $c$ is
  non-decreasing. This is of course equivalent to being locally
  \emph{separately} non-decreasing, i.e., locally non-decreasing in
  each variable.
\end{example}

Next we also have:
\begin{prop}\label{lem:SDcomplete}
  $\SDTop$ is complete as a subcategory of $\DTop$.
\end{prop}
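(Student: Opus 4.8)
The plan is to show that the inclusion $J\colon \SDTop \to \DTop$ creates limits: the limit in $\DTop$ of any diagram of saturated $d$-spaces is again saturated. Since $\SDTop$ is full in $\DTop$, the $\DTop$-universal property then restricts to a universal property in $\SDTop$, so completeness of $\SDTop$ reduces to that of $\DTop$ (which is a topological category over $\Top$, hence complete).

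First I would recall how limits look in $\DTop$. The forgetful functor $U\colon \DTop \to \Top$ has a right adjoint (equip a space with \emph{all} its paths), so it preserves limits; thus the underlying space of a limit $L$ of a diagram $D\colon \mathcal J \to \DTop$ is the limit in $\Top$, with cone legs $\lambda_j\colon L \to D_j$. A path $p\colon \III \to L$ should be declared directed exactly when every $\lambda_j\circ p$ is directed in $D_j$; one checks routinely that this is the largest $d$-structure making all $\lambda_j$ morphisms and that it satisfies the universal property. So $dL = \{\, p : \lambda_j\circ p \in dD_j \text{ for all } j \,\}$.

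Now assume each $D_j$ is saturated and take a weakly directed path $p \in \hat d L$; I must prove $p\in dL$, i.e.\ $\lambda_j\circ p\in dD_j$ for every $j$. Since $D_j$ is saturated, it is enough to show that each projected path $\lambda_j\circ p$ is weakly directed in $D_j$. This is the heart of the argument. Let $g\colon V\to\UI$ be a directed function on an open $V\subseteq D_j$ and put $U:=\lambda_j^{-1}(V)$, an open subspace of $L$ with $\lambda_j(U)\subseteq V$. By Proposition~\ref{prop:pbkdir}, $g\circ\lambda_j$ is a directed function on $U$. As $p$ is weakly directed in $L$, the composite $g\circ(\lambda_j\circ p)=(g\circ\lambda_j)\circ p$ is locally non-decreasing on $(\lambda_j\circ p)^{-1}(V)=p^{-1}(U)$. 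Since $g$ and $V$ were arbitrary, $\lambda_j\circ p$ is weakly directed, hence directed by saturation of $D_j$. As this holds for all $j$, we conclude $p\in dL$, so $L$ is saturated.

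I expect the only delicate points to be administrative: verifying the claimed formula for $dL$ (and thereby that $\DTop$ is complete with limits created by $U$), and correctly threading the open sets through $\lambda_j$ so that Proposition~\ref{prop:pbkdir} applies and the inverse images $(\lambda_j\circ p)^{-1}(V)$ and $p^{-1}(\lambda_j^{-1}(V))$ coincide. The conceptual content --- that weak directedness of $p$ transports along each cone leg because directed functions pull back along morphisms --- is immediate once these are in place.
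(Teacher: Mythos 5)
Your proof is correct, but it takes a genuinely different route from the paper's. You argue concretely that the inclusion creates limits: using the explicit description of limits in $\DTop$ (underlying limit in $\Top$, with $dL = \{\, p \mid \lambda_j \circ p \in dD_j \text{ for all } j \,\}$, which is indeed Grandis's construction), you show that a weakly directed path $p$ in the limit projects to a weakly directed path $\lambda_j \circ p$ in each $D_j$, because directed functions on $D_j$ pull back along the cone legs to directed functions on $L$ by Proposition~\ref{prop:pbkdir}; saturation of each $D_j$ then forces $\lambda_j \circ p$ to be directed, hence $p \in dL$. The paper instead gives a purely formal two-line argument exploiting the reflection $\LL \dashv J$ built in Section~\ref{sec:adj}: writing $d$ for the limit in $\DTop$ and $d' = \saturate(d)$, the cone $u_j \colon d \to D_j$ factors through the unit $d \to d'$ since each $D_j$ is saturated, and the universal property of $d$ then yields a compatible morphism $d' \to d$, which must be the identity on underlying spaces; thus weakly directed paths of $d$ are directed and $d$ is already saturated. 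This is essentially the standard fact that a full reflective subcategory is closed under existing limits, sharpened to on-the-nose equality because the unit is carried by the identity map. The paper's route is shorter, reuses machinery already in place, and needs neither the concrete limit formula nor Proposition~\ref{prop:pbkdir}; yours is more elementary and self-contained, exposes the actual mechanism (directed functions transporting along cone legs), and delivers the explicit $d$-structure of the limit as a byproduct. The administrative points you flag (the formula for $dL$ and the identity $p^{-1}(\lambda_j^{-1}(V)) = (\lambda_j \circ p)^{-1}(V)$) are routine and do not hide any gap.
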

\begin{proof}
  First, recall that $\DTop$ is complete~\cite{Grandis}.  Then,
  consider any diagram $D \colon J \to \SDTop$, and its limiting cone
  $u_j \colon d \to D_j$ in $\DTop$. Let $d' = \saturate(d)$. By
  universal property of $\eta$, this yields a cone $u'_j \colon d' \to
  D_j$ in $\SDTop$. By universal property of $d$, we also have a compatible morphism
from $d'$ to $d$, which has to be the identity.\end{proof}

\section{Towards directed homotopy}\label{sec:cyl}
As explained by Grandis~\cite{Grandis}, the basic requirement for
building directed homotopy is the existence of convenient cylinder and
cocylinder constructions. In the present section, we check that our
category $\SDTop$ is stable under the cylinder and cocylinder
constructions in $\DTop$. In Grandis's terminology, this reads as
follows:
\begin{theorem}
  $\SDTop$  is a cartesian  dIP1- category.
\end{theorem}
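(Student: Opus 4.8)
The plan is to leverage that $\DTop$ is already a cartesian dIP1-category (Grandis~\cite{Grandis}), so that it carries a cylinder functor $X \mapsto X \times \UI$, a cocylinder (path) functor $P$ with $PX = X^{\UI}$, the adjunction $(-\times \UI) \dashv P$, together with the faces and degeneracies making up the dIP1 structure. Since $\SDTop$ is a \emph{full} subcategory of $\DTop$, it suffices to prove that both functors restrict to $\SDTop$, i.e. carry saturated $d$-spaces to saturated $d$-spaces, and that the structural natural transformations have saturated source and target; the equations defining a dIP1-category are then inherited verbatim from $\DTop$. The adjective \emph{cartesian} comes for free, since by Proposition~\ref{lem:SDcomplete} limits in $\SDTop$ are computed as in $\DTop$, so the restricted cylinder is literally the product with $\UI$ in $\SDTop$.

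The cylinder is the easy half. The interval $\UI$ is saturated (Examples~\ref{ex:sat}), a binary product is a limit, and $\SDTop$ is closed under limits in $\DTop$ by Proposition~\ref{lem:SDcomplete}; hence $X \times \UI$ is saturated whenever $X$ is, and the two faces $X \to X \times \UI$ and the degeneracy $X \times \UI \to X$ are morphisms of $\DTop$ between saturated spaces, hence morphisms of $\SDTop$.

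The cocylinder is the main obstacle, because $PX$ is not a limit and its underlying space is the infinite-dimensional path space. I would use the concrete description coming from the adjunction: the points of $PX$ are the directed paths $dX = \mathrm{Hom}_{\DTop}(\UI, X)$, topologised as a subspace of $X^{\III}$, and a path $\delta \colon \III \to PX$, equivalently a continuous $\tilde\delta \colon \III \times \III \to X$ all of whose slices $\tilde\delta(t,-)$ are directed, is directed in $PX$ exactly when $\tilde\delta$ is a morphism $\UI \times \UI \to X$. Now take a \emph{weakly} directed $\delta$ and show $\tilde\delta$ is a morphism. First, each evaluation $\ev_s \colon PX \to X$, for $s \in \III$, is a morphism of $d$-spaces, since $t \mapsto (t,s)$ is a directed path of $\UI \times \UI$; hence by Proposition~\ref{prop:pbkdir} every directed function $c$ on an open of $X$ pulls back along $\ev_s$ to a directed function on $PX$. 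Feeding these into the weak directedness of $\delta$ shows that $t \mapsto c(\tilde\delta(t,s))$ is locally non-decreasing for each fixed $s$; and since each slice $\tilde\delta(t,-)$ is directed in $X$, the function $s \mapsto c(\tilde\delta(t,s))$ is non-decreasing for each fixed $t$. Thus $c \circ \tilde\delta$ is locally separately non-decreasing, hence, by Example~\ref{ex:satt}, a directed function on the corresponding open of $\UI \times \UI$. Consequently $c \circ \tilde\delta$ is non-decreasing along every directed path $r \mapsto (a(r),b(r))$ of $\UI \times \UI$, so that $r \mapsto \tilde\delta(a(r),b(r))$ is weakly directed in $X$; as $X$ is saturated it is directed. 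This is precisely the statement that $\tilde\delta$ is a morphism $\UI \times \UI \to X$, so $\delta$ is directed and $PX$ is saturated.

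With both functors shown to preserve saturation, the faces and the degeneracy of the cocylinder (each an evaluation- or constant-path morphism of $\DTop$ between saturated spaces) land in $\SDTop$, and fullness transports the dIP1 identities from $\DTop$ to $\SDTop$. The real content, and the step I expect to be delicate, is the saturation of $PX$: the point is that Example~\ref{ex:satt} collapses the a priori infinite-dimensional weak-directedness condition on the function space $PX$ into the two-variable statement that $c \circ \tilde\delta$ is locally separately non-decreasing on the square, which the saturation of $X$ then upgrades to genuine directedness.
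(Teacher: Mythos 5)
Your treatment of the cylinder and the cocylinder is correct and follows essentially the paper's own route. The cylinder is dispatched exactly as in the paper: $\UI$ is saturated and $\SDTop$ is closed under limits in $\DTop$ (Proposition~\ref{lem:SDcomplete}). For the cocylinder, your ingredients are the paper's ingredients: evaluations $\ev_s \colon X^\UI \to X$ are morphisms, so directed functions on $X$ pull back along them (Proposition~\ref{prop:pbkdir}); Example~\ref{ex:satt} converts local separate monotonicity on the square into directedness of functions on opens of $\UI \times \UI$; and saturation of $X$ closes the argument. The only organizational difference is that the paper factors the cocylinder step through the intermediate notion of \emph{separately directed} path (first upgrading each slice $\ev_t \circ \delta$ to a genuinely directed path of $X$, then applying Proposition~\ref{prop:dmap}), whereas you never upgrade the first-variable slices and instead work throughout at the level of compositions with directed functions; both are sound and use the same tools. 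One small imprecision: for fixed $t$, the function $s \mapsto c(\tilde\delta(t,s))$ is only \emph{locally} non-decreasing on the $s$-section of $\tilde\delta^{-1}(U)$, which need not be connected; this local form is all you need and all you are entitled to, and it suffices for Example~\ref{ex:satt}.

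There is, however, a genuine omission: in Grandis's terminology a (cartesian) dIP1-category carries, besides the cylinder/cocylinder adjunction with its faces and degeneracies, a \emph{reversor} $R$, the direction-reversing involution (for d-spaces, $RX$ is $X$ equipped with the paths $t \mapsto p(1-t)$ for $p \in dX$). A proof that $\SDTop$ inherits the dIP1 structure of $\DTop$ must therefore also check that $RX$ is saturated whenever $X$ is, and your proposal never mentions $R$. This is the third verification in the paper's proof, and while it is short, it is not vacuous: one first checks that directed functions on $RX$ are exactly those obtained by reversion (in the $\UI$ coordinate) from directed functions on $X$, so that a weakly directed path $c$ of $RX$ has a reversal that is weakly directed, hence directed, in $X$, which in turn means $c$ is directed in $RX$. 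Without this step your argument establishes stability under the cylinder and cocylinder only, which is not yet the full dIP1 statement.
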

\begin{proof}
Since $\SDTop$ is a full cartesian subcategory of $\DTop$ which is a cartesian  dIP1- category (see~\cite[Section 1.5.1]{Grandis}), we just have to check that it is stable under the cylinder,  
the cocylinder and the reversor constructions.
 
 It is clear for the cylinder, since it is the product with the directed interval, which is an object of $\SDTop$.

For the cocylinder construction, we must check that for $X$ in  $\SDTop$, its path-object $X ^\UI$ is again in $\SDTop$. Thus we have to prove that weakly directed paths  
in  $ X ^\UI$ are directed.  

%
On the way, we have ``separately directed'' paths.  First, recall from
Grandis~\cite[Section 1.5.1]{Grandis} that for any d-space $X$ and $t
\in \III$, evaluation at $t$ yields a directed morphism $\ev_t \colon
X^\UI \to X$. Say that a path $p \colon \UI \to X^\UI$ is \emph{separately
  directed} iff for all $t \in \III$, $\ev_t \circ p$ is directed in
$X$.  
 
We first prove that any weakly directed path $p \colon I \to X ^\UI$
is separately directed. Thus we have a point $t \in I$ and we must
prove that $p_t := \ev_t \circ p$ is directed in $X$.  Since $X$ is
saturated, it is enough to show that it is weakly directed. For this,
we consider a directed function $f\colon U \to \UI$ on an open set $U
\subseteq X$, and we must prove that, where defined, $f \circ p_t$ is
locally non-decreasing. In pictures, we must prove that the top row of
 
\begin{center}
  \Diag{%
    \pbk{III}{U''}{U'} %
    \pbk{XUI}{U'}{U} %
  }{%
   |(U'')| U'' \&|(U')| U' \&|(U)| U \& |(I)| I \\
   |(III)| \III \&|(XUI)| X^{\UI} \& |(X)| X %
  }{%
    (U'') edge[labelu={i}] (U') %
    edge[into] (III) %
    (U') edge[labelu={j}] (U) %
    edge[into] (XUI) %
    (U) edge[labelu={f}] (I) %
    edge[into] (X) %
    (III) edge[labeld={p}] (XUI) %
    (XUI) edge[labeld={\ev_t}] (X) %
    (III) edge[bend right={20},labeld={p_t}] (X) %
  }
\end{center}
is locally non-decreasing. Since $p$ is weakly directed in $X^\UI$, it
is enough to show that $f \circ j$ is directed, which holds by
Proposition~\ref{prop:pbkdir}.

 
Now we prove that any separately directed path is directed.  Consider
any separately directed $p \colon \UI \to X^\UI$. It is directed in
Grandis's sense iff its uncurrying $p' \colon \UI \times \UI \to X$
is.  For this, by Example~\ref{ex:satt} ($\UI \times \UI$ is saturated)
and Proposition~\ref{prop:dmap}, it is enough to show that for any
directed map $f$ on $X$, the composite $f \circ p'$, where defined, is
directed on $\UI \times \UI$.  By Example~\ref{ex:satt} again, this is
equivalent to both $f \circ p' \circ \langle \id, t \rangle$ and $f
\circ p' \circ \langle t, \id \rangle$ being locally non-decreasing,
for any $t \in \III$.  For the first map, observe that $p' \circ
\langle \id, t \rangle = \ev_t \circ p$ is directed in $X$ by
hypothesis.  Hence, because $X$ is saturated, its composition with
$f$, where defined, is locally non-decreasing. For the second map, we
have $p' \circ \langle t, \id \rangle = p(t)$, which is directed in
$X$ by construction of $X^\UI$, hence, again its composition with $f$,
where defined, is locally non-decreasing.
 
Finally we check that $\SDTop$ is stable under reversion. For this we take a saturated $d$-space $X$ and prove that $RX$ is saturated. We first check that directed functions on $RX$ are exactly  
obtained by reversion from directed functions on $X$. Then we take a weakly directed path $c$ in $RX$ and check easily that its reversion is weakly directed on $X$ hence directed,  
which means that $c$ is directed in $RX$.
\end{proof}
\section{A universal property of saturation}\label{sec:other}
 
In this final section, we discuss other possible saturation processes, showing in which sense our choice is the best one.
 
As a first naive attempt, we could have defined weakly directed paths by testing only against global directed functions.  
In this case, the directed interval would have remained saturated; but the saturation of the directed
circle would have produced the reversible circle, which is highly undesired. This explains why we have considered local directed functions.
 
As a second, much more reasonable attempt, we can define \emph{almost directed} paths to be limits (in the compact-open topology) of directed paths.
 It is easily checked that almost directed paths are weakly directed. But we observe that almost directed paths are not in general stable by concatenation. To see this, just
equip the real line $L$ with the set $dL$ of paths which are constant or avoid $0$: almost directed paths are those which stay in the nonnegative,  
or in the nonpositive half-line.
 
Of course we could nevertheless define the small saturation of a $d$-space $X$ to be obtained by equipping $X$ with the smallest set $adX$ of paths in $X$ containing almost directed paths and  
stable by reparameterisation and concatenation. This is in general strictly smaller than the set of weakly directed paths. To show this  we sketch an ad hoc example.
 
\begin{example}
 Our example is a subspace $H$ (for harp) of $\RR^3$. It consists of a skew curve $C$, together with some of its chords $L_{a,b}$ (here, by the chord, we mean the closed segment) .  
For the curve $C$, we take the rational cubic curve:
 
$$ C := \{(t, t^2, t^3) \vert t \in \RR \}.$$
 
The interesting property of this curve is that its chords meet $C$
only at two points, and two of these chords cannot meet outside
$C$. Indeed, otherwise, the plane containing two such chords would
meet our cubic curve in four points. We pose $C_t := (t, t^2, t^3)$
and write $L_{a,b}$ for the chord through $C_a$ and $C_b$.
 
We take for $H$ the union of $C$ with the chords $L_{a,b}$ for $a<b$, $a$ rational and $b$ irrational (the point here is that these two subsets are dense and disjoint).
We take for $dH$ the set of paths which are either constant or directed paths in one of the chords $L_{a,b}$ equipped with the usual order with $C_a  < C_b$. These are clearly stable by reparameterisation  
and there is clearly no possibility for concatenation except within a chord. Thus this yields a $d$-space.
 
Concerning this $d$-space, we have two claims. We first claim that this set of paths is closed. Indeed, a (simple) limit of paths each contained in a line is contained in a line too and if the  
limiting path is not constant, the line for the limit has to be the limit of the lines. Secondly we claim that this $d$-space is not saturated. Indeed, local directed functions are non-decreasing along $C$
 (equipped with the obvious order, where $C_a <C_b$ means $a <b$). To see this, consider a local function $f \colon  U \to \UI$ where $U$ is an open neighbourhood of $C_a$. We may choose a neighborhood  
$V$ of $C_a$ on $C$ such that $U$ contains any chord joining two points in $V$. Since $f$ is continuous (in particular along $C$) and non-decreasing along these chords (whose endpoints are dense in $V$),  
it has to be non-decreasing along $V$. Thus directed paths on $C$ (equipped with the above order) are weakly directed in $H$ but not directed.

\end{example}

Now we wish to show in which sense our saturation process is maximal among reasonable saturation processes, in the following sense.
 
\begin{defi} A \emph{d-saturation process} is any functor $S \colon
  \DTop \to \DTop$ which commutes with the forgetful functor $\DTop
  \to Top$, equipped with a natural transformation from the identity $
  \eta^S\colon \id \to S$, such that
  \begin{itemize}
  \item $\eta^S$ is mapped to the identity by the forgetful functor to $\Top$;
  \item its component $\eta^S_\UI \colon \UI \to S\UI$ at $\UI$ is the identity;
  \item $S$ satisfies the following ``locality'' condition: for any
    $d$-space $X$ and subspace $Y \subseteq X$, directed paths in $SX$
    with image contained in $Y$ are also directed in $SY$.
  \end{itemize}
\end{defi}
 
\begin{remark}   
Let us comment on the previous condition. First note that directed paths in $SY$ are automatically directed in $SX$ thanks to functoriality. Next let us expain why our condition concerns locality:  
 if $X$ is covered by open subspaces $Y_i$, then $SX$ is determined by the $SY_i$'s. Indeed , by concatenation (and compactness of $\III$),  
a path in $SX$ is directed if and only if each of its restrictions contained in a $Y_i$ is directed in this $SY_i$. (The locality condition is here used in the ``only if'' direction.)
\end{remark}
 
\begin{example} Our functor $L \colon X \mapsto (X, \hat d X)$ is
  obviously a d-saturation process, with $\eta^L$ the unit of $L
  \dashv J$.
\end{example}
 
Now we have an order on d-saturation processes, which says $S \le T$ whenever, for each $X \in \DTop$, the set-theoretic identity of $X$ is directed from $SX$ to $TX$.
Observe in particular that the induced poset contains the (opposite of the) poset of  fully reflective subcategories of $\DTop$.
\begin{theorem} Our functor $L $ is maximal among d-saturation processes.
 
\end{theorem}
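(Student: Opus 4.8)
The plan is to prove the stronger statement that $L$ is the \emph{maximum} d-saturation process, i.e.\ that $S \le L$ for every d-saturation process $S$. Unwinding the order defined just above, $S \le L$ means that for each $d$-space $X$ the set-theoretic identity of the underlying space is directed from $SX$ to $LX$; since $LX = \hat X = (X,\hat d X)$, this says precisely that every path directed in $SX$ is weakly directed in $X$. So I would fix a d-saturation process $S$, a $d$-space $X$, and a path $p$ directed in $SX$, and aim to show $p \in \hat d X$.

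By the definition of weak directedness I take an arbitrary directed function $f \colon U \to \UI$ on an open subspace $U \subseteq X$ and must check that $f \circ p$ is locally non-decreasing on $p^{-1}(U)$. As this is a purely local statement, I would work near a fixed $s \in p^{-1}(U)$: choosing a closed subinterval $[a,b] \subseteq p^{-1}(U)$ that is a neighbourhood of $s$ in $I$ and precomposing $p$ with the affine non-decreasing reparameterisation of $I$ onto $[a,b]$, I obtain a path $q$ that is again directed in $SX$ (directed paths are stable under precomposition with non-decreasing maps, this being a $\DTop$ axiom) and whose image lies in $U$.

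The crux of the argument then chains the three defining properties of $S$. First, the locality condition applied to the subspace $U \subseteq X$ upgrades $q$ from a path directed in $SX$ with image in $U$ to a path directed in $SU$. Second, since $f \colon U \to \UI$ is by definition a morphism of $d$-spaces, functoriality of $S$ gives a morphism $Sf \colon SU \to S\UI$; and the normalisation $\eta^S_\UI = \id$ forces $S\UI = \UI$, so $Sf$ is a morphism $SU \to \UI$. Hence $Sf \circ q$ is directed in $\UI$, that is, non-decreasing. Third, because $S$ commutes with the forgetful functor to $\Top$, the underlying continuous map of $Sf$ is $f$ itself, so $Sf \circ q = f \circ q$ as maps of spaces; thus $f \circ q$, which is exactly the restriction of $f \circ p$ to $[a,b]$ up to reparameterisation, is non-decreasing. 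Since $s$ was arbitrary this makes $f \circ p$ locally non-decreasing, so $p$ is weakly directed, giving $S \le L$ and hence the maximality (indeed maximum) of $L$.

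The only routine verifications are that the restricted-and-reparameterised path $q$ is directed and that reparameterisation does not affect monotonicity. The step I expect to be the main obstacle is the localisation: weak directedness tests against directed functions defined only on \emph{open subspaces}, so one cannot simply apply $S$ to $f$ on all of $X$; instead one must cut $p$ down to a subpath with image in $U$ and invoke the locality axiom to land in $SU$, where the functorial image $Sf$ is available. The two normalisation hypotheses, $\eta^S_\UI = \id$ and commutation with the forgetful functor, are precisely what allow the abstract conclusion ``directed in $S\UI$'' to be read as the concrete ``non-decreasing'', closing the argument.
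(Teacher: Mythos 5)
Your proof is correct and follows essentially the same route as the paper's: restrict the $SX$-directed path to a closed subpath landing in $U$ (directedness of the subpath being guaranteed by the reparameterisation axiom), use the locality axiom to see it as directed in $SU$, and use functoriality together with $\eta^S_\UI = \id$ and commutation with the forgetful functor to view $f$ as a morphism $SU \to S\UI = \UI$, whence the composite is non-decreasing. The only difference is that you spell out explicitly the reduction of local non-decreasingness to closed subpaths, which the paper leaves implicit.
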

 
\begin{proof}
Let us consider a d-saturation process $S$. What we have to prove is that, given a $d$-space $X$, any directed path $c$ in $SX$ is weakly directed in $X$. For this, we take a directed function $f \colon  U \to \UI$
on $X$ and prove that for any closed directed subpath $c'$ of $c$ with image contained in $U$, $f \circ c'$ is non-decreasing. By functoriality of $S$, $f$ is also a morphism from $SU$  
to $\UI = S\UI$, and by locality,  $c'$ is also directed in $SU$, so that $f \circ c'$ is an endomorphism of $\UI$, hence non-decreasing.
\end{proof}
 
 
\bibliographystyle{eptcs} \bibliography{bib}

\begin{thebibliography}{1}
\providecommand{\bibitemdeclare}[2]{}
\providecommand{\urlprefix}{Available at }
\providecommand{\url}[1]{\texttt{#1}}
\providecommand{\href}[2]{\texttt{#2}}
\providecommand{\urlalt}[2]{\href{#1}{#2}}
\providecommand{\doi}[1]{doi:\urlalt{http://dx.doi.org/#1}{#1}}
\providecommand{\bibinfo}[2]{#2}

\bibitemdeclare{article}{Fajstrup2006241}
\bibitem{Fajstrup2006241}
\bibinfo{author}{Lisbeth Fajstrup}, \bibinfo{author}{Martin Raußen} \&
  \bibinfo{author}{Eric Goubault} (\bibinfo{year}{2006}):
  \emph{\bibinfo{title}{Algebraic topology and concurrency}}.
\newblock {\sl \bibinfo{journal}{Theoretical Computer Science}}
  \bibinfo{volume}{357}(\bibinfo{number}{1-3}), pp. \bibinfo{pages}{241 --
  278}, \doi{10.1016/j.tcs.2006.03.022}.
\newblock \bibinfo{note}{Clifford Lectures and the Mathematical Foundations of
  Programming Semantics}.

\bibitemdeclare{article}{Gaucher03}
\bibitem{Gaucher03}
\bibinfo{author}{P.~{Gaucher}} (\bibinfo{year}{2003}): \emph{\bibinfo{title}{{A
  model category for the homotopy theory of concurrency}}}.
\newblock {\sl \bibinfo{journal}{ArXiv Mathematics e-prints}} .

\bibitemdeclare{inproceedings}{DBLP:conf/concur/GoubaultJ92}
\bibitem{DBLP:conf/concur/GoubaultJ92}
\bibinfo{author}{Eric Goubault} \& \bibinfo{author}{Thomas~P. Jensen}
  (\bibinfo{year}{1992}): \emph{\bibinfo{title}{Homology of Higher Dimensional
  Automata}}.
\newblock In \bibinfo{editor}{Rance Cleaveland}, editor: {\sl
  \bibinfo{booktitle}{CONCUR}}, {\sl \bibinfo{series}{Lecture Notes in Computer
  Science}} \bibinfo{volume}{630}, \bibinfo{publisher}{Springer}, pp.
  \bibinfo{pages}{254--268}, \doi{10.1007/BFb0084796}.

\bibitemdeclare{book}{Grandis}
\bibitem{Grandis}
\bibinfo{author}{Marco Grandis} (\bibinfo{year}{2009}):
  \emph{\bibinfo{title}{Directed Algebraic Topology}}.
\newblock {\sl \bibinfo{series}{New Mathematical
  Monographs}}~\bibinfo{volume}{13}, \bibinfo{publisher}{Cambridge University
  Press}.

\bibitemdeclare{article}{DBLP:journals/acs/Krishnan09}
\bibitem{DBLP:journals/acs/Krishnan09}
\bibinfo{author}{Sanjeevi Krishnan} (\bibinfo{year}{2009}):
  \emph{\bibinfo{title}{A Convenient Category of Locally Preordered Spaces}}.
\newblock {\sl \bibinfo{journal}{Applied Categorical Structures}}
  \bibinfo{volume}{17}(\bibinfo{number}{5}), pp. \bibinfo{pages}{445--466},
  \doi{10.1007/s10485-008-9140-9}.

\bibitemdeclare{book}{MacLane:cwm}
\bibitem{MacLane:cwm}
\bibinfo{author}{Saunders {Mac Lane}} (\bibinfo{year}{1998}):
  \emph{\bibinfo{title}{Categories for the Working Mathematician}},
  \bibinfo{edition}{2nd} edition.
\newblock {\sl \bibinfo{series}{Graduate Texts in
  Mathematics}}~\bibinfo{volume}{5}, \bibinfo{publisher}{Springer}.

\bibitemdeclare{inproceedings}{Pratt:1991:MCG:99583.99625}
\bibitem{Pratt:1991:MCG:99583.99625}
\bibinfo{author}{Vaughan Pratt} (\bibinfo{year}{1991}):
  \emph{\bibinfo{title}{Modeling concurrency with geometry}}.
\newblock In: {\sl \bibinfo{booktitle}{Proceedings of the 18th ACM
  SIGPLAN-SIGACT symposium on Principles of programming languages}},
  \bibinfo{series}{POPL '91}, \bibinfo{publisher}{ACM}, \bibinfo{address}{New
  York, NY, USA}, pp. \bibinfo{pages}{311--322}, \doi{10.1145/99583.99625}.

\bibitemdeclare{article}{Worytkie:sheaves}
\bibitem{Worytkie:sheaves}
\bibinfo{author}{Krzysztof Worytkiewicz} (\bibinfo{year}{2010}):
  \emph{\bibinfo{title}{Sheaves of ordered spaces and interval theories}}.
\newblock {\sl \bibinfo{journal}{Journal of Homotopy and Related Structures}}
  \bibinfo{volume}{5}(\bibinfo{number}{1}), pp. \bibinfo{pages}{37--61}.

\end{thebibliography}
 
\end{document}